\numberwithin{equation}{section}
\numberwithin{figure}{section}
\theoremstyle{plain}
\newtheorem{thm}{Theorem}
\newcommand{\K}{\mathcal{K}}
\begin{document}

\title{A New Short Proof for the Uniqueness of the Universal Minimal Space}


\author{Yonatan Gutman}
\author{Hanfeng Li}
\address{\hskip-\parindent
Yonatan Gutman, Laboratoire d'Analyse et de Mathématiques Appliquées,
Université de Marne-la-Vallée, 5 Boulevard Descartes, Cité Descartes
- Champs-sur-Marne, 77454 Marne-la-Vallée cedex 2, France.}
\email{yonatan.gutman@univ-mlv.fr}

\address{\hskip-\parindent
Hanfeng Li, Department of Mathematics, SUNY at Buffalo,
Buffalo NY 14260-2900, U.S.A.}
\email{hfli@math.buffalo.edu}

\subjclass[2010]{37B05, 54H20}

\date{June 11, 2011}
\begin{abstract}
We give a new short proof for the uniqueness of the universal minimal
space. The proof holds for the uniqueness of the universal object in every collection of topological dynamical systems closed
under taking projective limits and possessing universal objects.
\end{abstract}

\maketitle
\section{Introduction}

%
{}

In topological dynamics one studies jointly continuous actions
$G\times X\rightarrow X$ of (Hausdorff) topological groups $G$ on nonempty
(Hausdorff) compact spaces $X$. Such an action is called a {\it topological dynamical system}, and we call
$X$ a {\it $G$-space}.
A $G$-space $X$ is said to be \textit{minimal}
if $X$ and $\emptyset$ are the only $G$-invariant closed subsets
of $X$. By Zorn's lemma each $G$-space contains a minimal $G$-subspace.
These minimal objects are in some sense the most basic ones in the
category of $G$-spaces. For various topological groups $G$ they
have been the object of extensive study. Given a topological group $G$,
one is naturally interested in describing all of the minimal $G$-spaces up to isomorphism.
Such a description is given by the following construction: one can
show that there exists a minimal $G$-space $U_G$
with the universal property that every minimal $G$-space $X$ is a {\it factor} of
$U_G$, i.e., there is a continuous $G$-equivariant map from
$U_{G}$ onto $X$.
Any such $G$-space is called a \textit{universal minimal
$G$-space}, however it can be shown to be unique up to isomorphism. The existence of a universal minimal $G$-space is easy
to demonstrate by choosing a minimal $G$-subspace of the product over all
minimal $G$-spaces (one representative from each isomorphism class
- the collection of isomorphism classes of minimal $G$-spaces is a set).
The uniqueness turns
out harder to show, since for two universal minimal $G$-spaces $X$ and $Y$, there could be more than one epimorphism from $X$ to $Y$,
where an {\it epimorphism} is a surjective $G$-equivariant continuous map.
An easy observation is that it suffices to show that
a universal minimal $G$-space $X$ is \textit{coalescent}, i.e.
every epimorphism $\phi:X\rightarrow X$ is an isomorphism.
If $M_1$
and $M_2$ are universal minimal $G$-spaces then by universality
we have epimorphisms $\phi_1: M_1\rightarrow M_2$
and $\phi_2: M_2\rightarrow M_1$. If in addition $M_1$
is coalescent, then $\phi_2\circ\phi_1$ must be an isomorphism, and hence
$\phi_1$ and $\phi_2$ are  isomorphisms.

In the literature two different approaches for the proof of uniqueness
have been offered. The algebraic approach using Ellis semigroups was laid out by Ellis in
\cite{Ellis69} (he only treated the case of discrete $G$ but the proof works for any $G$).
The details can be found in \cite{de Vries} IV(4.30)
and \cite{Uspenskij} Appendix 3.
The main tool is the theorem that any minimal subsystem of an enveloping
semigroup (see \cite{de Vries} IV(3.5(1)) is coalescent. This theorem
is proven by using the Ellis-Numakura Lemma (\cite{Ellis58}, \cite{Numakura}):
every non-empty compact
Hausdorff right semitopological semigroup contains an idempotent.
Another proof was given by Auslander in \cite{Auslander}. The main
idea is to show that given a minimal system $(G,X)$ there is a cardinal
$\kappa$ such that $(G,X^{\kappa})$ contains a minimal coalescent
subsystem. The main tool is the so called \textit{almost periodic sets}.

We will give a proof for the uniqueness of the universal minimal space that holds in a more general setting. Given a collection $\K$ of $G$-spaces,
we call $M\in\K$, \textit{\textcolor{black}{$\K$-universal }}if
any $N\in\K$ is a $G$-factor of $M$. We show that if $\K$ is closed
under taking projective limits and posses universal spaces then it
has a unique universal space (up to isomorphism). In particular the reader
will be able to verify easily, that our result also applies to the universal
equicontinuous minimal space and the universal distal minimal space.

%
{}

\medskip{}

\noindent \textit{Acknowledgements.} H. Li was partially supported
by NSF grants DMS-0701414 and DMS-1001625. We are grateful to Eli
Glasner for helpful discussions. We thank the referee for useful comments.

\section{Uniqueness}
\begin{thm}
Let $G$ be a topological group and $\K$ a collection of $G$-spaces.
Assume that $\K$ is closed under taking projective limits. Let $M_{1},M_{2}$
be $\K$-universal spaces. Then $M_1$ and $M_2$ are isomorphic.
\end{thm}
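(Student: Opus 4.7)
My plan is to reduce the theorem to showing coalescence of any $\K$-universal space, i.e., that every epimorphism $\phi : M \to M$ with $M$ universal is an isomorphism. As noted in the introduction, this reduction is immediate: given universal $M_1, M_2$ and epimorphisms $\phi_1, \phi_2$ between them, coalescence of $M_1$ makes $\phi_2 \circ \phi_1$ an isomorphism, and then a one-line argument using continuous bijections of compact Hausdorff spaces forces $\phi_1$ and $\phi_2$ themselves to be isomorphisms.

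To prove coalescence, I form the projective limit
\[
\tilde M \;=\; \varprojlim \bigl( M \xleftarrow{\phi} M \xleftarrow{\phi} M \xleftarrow{\phi} \cdots \bigr),
\]
which belongs to $\K$ by the closure hypothesis. A direct coordinate calculation shows that the shift $\sigma : \tilde M \to \tilde M$ sending $(x_0, x_1, \ldots)$ to $(x_1, x_2, \ldots)$ is a $G$-isomorphism (the inverse prepends $\phi(x_0)$). By universality of $M$ there exists an epimorphism $\psi : M \to \tilde M$, whose components $\psi_n := \pi_n \circ \psi$ are epimorphisms $M \to M$ satisfying $\phi \circ \psi_{n+1} = \psi_n$ for every $n \ge 0$. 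Thus if one can arrange $\psi_0 = \mathrm{id}_M$, then $\psi_1$ is a right inverse of $\phi$.

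To force $\psi_0$ to be the identity, I would use universality a second time, exploiting the fact that $\tilde M$ is itself $\K$-universal: every $\K$-space is a factor of $M$, hence of $\tilde M$ through the projection $\pi_0 : \tilde M \to M$. Coupled with the automorphism $\sigma$, this supplies enough flexibility among the epimorphisms $M \to \tilde M$ to pick a $\psi$ whose zeroth coordinate is the identity of $M$. Once a right inverse $\psi_1$ is obtained, a symmetric argument --- iterating the same construction with $\psi_1 \circ \phi$ in place of $\phi$ --- is intended to promote $\psi_1$ to a two-sided inverse of $\phi$.

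The step I expect to be the main obstacle is precisely this rigidification: upgrading a right inverse of $\phi$ to a genuine inverse without the aid of minimality. When $\K$ is closed under passage to closed $G$-invariant subsystems --- as in the classical setting of minimal $G$-spaces --- the image $\psi_1(M)$ is a closed invariant subsystem, hence all of $M$ by minimality, and $\psi_1$ is automatically an isomorphism. In the generality stated here no such subsystem closure is assumed, so the rigidity has to be extracted purely from $\K$-universality together with closure under projective limits; carrying this out economically, via the shift isomorphism on $\tilde M$ and one extra passage to a projective limit, is what I expect makes the promised proof genuinely short.
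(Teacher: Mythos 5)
Your reduction to coalescence is exactly the paper's, and the tower $M \leftarrow M \leftarrow M \leftarrow \cdots$ with bonding map $\phi$ is a natural object to form, but the proof does not close: the step ``arrange $\psi_0=\mathrm{id}_M$'' is a genuine gap, not a technicality. Universality gives you \emph{some} epimorphism $\psi\colon M\to\tilde M$ and nothing more; you have no control over the epimorphism $\psi_0=\pi_0\circ\psi\colon M\to M$. Composing $\psi$ with powers of the shift $\sigma$ only replaces the sequence $(\psi_0,\psi_1,\dots)$ by $(\psi_1,\psi_2,\dots)$, i.e.\ moves you along the same tower, and invoking the universality of $\tilde M$ merely reproduces the situation you started from; neither produces a lift whose zeroth coordinate is the identity. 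Note also that you have misplaced the difficulty: if you \emph{could} arrange $\psi_0=\mathrm{id}_M$ you would be done on the spot, since $\phi\circ\psi_1=\psi_0=\mathrm{id}_M$ makes $\psi_1$ an injective epimorphism of a compact Hausdorff $G$-space, hence a $G$-isomorphism, and then $\phi=\psi_1^{-1}$. So the ``rigidification'' you defer to the end is trivial, while the step you treat as available ``flexibility'' is the entire content of coalescence; as written the argument is circular.

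The paper sidesteps this by arguing by contradiction with a cardinality count. Suppose $\phi$ is not injective and fix an ordinal $\beta$ with $|\beta|>|M^2|$. By transfinite recursion one builds spaces $X_\alpha\in\K$ for $\alpha\le\beta$ with compatible epimorphisms $\phi_{\gamma,\alpha}\colon X_\alpha\to X_\gamma$: at a successor stage set $X_\alpha=M$ with bonding map $f_\alpha\circ\phi$, where $f_\alpha\colon M\to X_{\alpha-1}$ comes from universality, so that non-injectivity of $\phi$ hands you a fresh pair of distinct points of $X_\alpha$ identified in $X_{\alpha-1}$; at a limit stage take the projective limit, which stays in $\K$ by hypothesis. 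Lifting these pairs to $X_\beta$ yields $|\beta|$ distinct elements of $X_\beta\times X_\beta$, whence $|\beta|\le|X_\beta^2|$; but $X_\beta$ is a factor of $M$, so $|X_\beta^2|\le|M^2|<|\beta|$, a contradiction. The moral for your approach is that the $\omega$-indexed tower is too short: one must iterate transfinitely and win by counting, rather than by trying to normalize a single lift into the limit.
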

\begin{proof}
As noted in the introduction it suffices to prove that every $\K$-universal
space $M$ is coalescent. Let $\varphi:M\rightarrow M$ be an epimorphism. We will show that $\varphi$ is injective. Assume
not. Let $\beta$ be an ordinal with $|\beta|>|M^{2}|$. We will define
for any ordinal $\alpha\leq\beta$ a $G$-space $X_{\alpha}\in\K$
and epimorphisms $\phi_{\gamma, \alpha}:X_{\alpha}\rightarrow X_{\gamma}$
for any ordinal $\gamma\leq\alpha$ with the following properties:

\begin{enumerate}
\item \label{enu:(Compatibility)}(Compatibility) $\phi_{\delta, \alpha}=\phi_{\delta, \gamma}\circ\phi_{\gamma, \alpha}$
for $\alpha\geq\gamma\geq\delta$.
\item \label{enu:(Cardinality)} For each ordinal $\gamma<\alpha$, there
exist distinct $x_{\gamma},y_{\gamma}\in X_{\gamma+1}$ with $\phi_{\gamma,\gamma+1}(x_{\gamma})=\phi_{\gamma,\gamma+1}(y_{\gamma})$.
We denote this common image by $z_{\gamma}$.
\end{enumerate}

By the universality of $(G,M)$ one has an epimorphism $M\rightarrow X_{\beta}$.
This implies $|X_{\beta}|\leq|M|$. For each $\gamma<\beta$, since
$\phi_{\gamma+1, \beta}$ is surjective, we can find $\tilde{x}_{\gamma},\tilde{y}_{\gamma}\in X_{\beta}$
with $\phi_{\gamma+1, \beta}(\tilde{x}_{\gamma})=x_{\gamma}$ and $\phi_{\gamma+1, \beta}(\tilde{y}_{\gamma})=y_{\gamma}$.
For any $\gamma<\alpha<\beta$, one has $\phi_{\gamma+1, \beta}(\tilde{x}_{\alpha})=\phi_{\gamma+1, \alpha}(z_{\alpha})=\phi_{\gamma+1, \beta}(\tilde{y}_{\alpha})$,
and $\phi_{\gamma+1, \beta}(\tilde{x}_{\gamma})=x_{\gamma}\neq y_{\gamma}=\phi_{\gamma+1, \beta}(\tilde{y}_{\gamma})$,
and hence $(\tilde{x}_{\alpha},\tilde{y}_{\alpha})\neq(\tilde{x}_{\gamma},\tilde{y}_{\gamma})$.
This implies that the map $\{\gamma|\,0\leq\gamma<\beta\}\rightarrow X_{\beta}\times X_{\beta}$
given by $\gamma\mapsto(\tilde{x}_{\gamma},\tilde{y}_{\gamma})$ is
injective, which in turn implies that $|\beta|\leq|X_{\beta}^{2}|$.
Putting all the inequalities together including our initial choice
$|\beta|>|M^{2}|$, we get $|M^{2}|<|\beta|\leq|X_{\beta}^{2}|\leq|M^{2}|$,
which is impossible.

The construction is carried out through transfinite induction. Let $X_{0}=M$.
If $\alpha$ is a successor ordinal, take an epimorphism $f_{\alpha}:M\rightarrow X_{\alpha-1}$ using the universality
of $M$ and define $(G,X_{\alpha})=(G,M)$, $\phi_{\alpha-1, \alpha}=f_{\alpha}\circ\varphi$,
and for $\gamma<\alpha-1$, define $\phi_{\gamma, \alpha}=\phi_{\gamma, \alpha-1}\circ\phi_{\alpha-1, \alpha}$.
If $\alpha$ is a limit ordinal,
define $X_{\alpha}\in\K$  to be the projective
limit of $(X_{\gamma})_{\gamma<\alpha}$, and for
$\gamma<\alpha$, define $\phi_{\gamma, \alpha}:X_{\alpha}\rightarrow X_{\gamma}$ to be the epimorphism
coming from the projective limit. Clearly $X_{\alpha}$
is a $G$-space, and the conditions (\ref{enu:(Compatibility)})
and (\ref{enu:(Cardinality)}) are satisfied.
\end{proof}

\Small

\end{document}